\newtheorem{theorem}{Theorem}[section]
\newtheorem{lemma}{Lemma}[section]
\newtheorem{prop}{Proposition}[section]
\newtheorem{corollary}{Corollary}[section]
\numberwithin{equation}{section}
\newcommand{\ZZ}{\mathbb{Z}}
\newcommand{\cP}{\mathcal{P}}
\newcommand{\tH}{\tilde{H}}
\newcommand{\wt}{\mathrm{wt}}
\newcommand{\tlambda}{\tilde{\lambda}}
\newcommand{\tmu}{\tilde{\mu}}
\newcommand{\tnu}{\tilde{\nu}}
\newcommand{\tkappa}{\tilde{\kappa}}
\renewcommand{\@makefnmark}{\mbox{\textsuperscript{}}}
\begin{document}

\bibliographystyle{amsplain}

\title{A product formula for multivariate Rogers-Szeg\"o polynomials}
\author{Stephen Cameron and C. Ryan Vinroot} 
\date{}

\maketitle

\begin{abstract} Let $H_n(t)$ denote the classical Rogers-Szeg\"o polynomial, and let $\tH_n(t_1, \ldots, t_l)$ denote the homogeneous Rogers-Szeg\"o polynomial in $l$ variables, with indeterminate $q$.  There is a classical product formula for $H_k(t)H_n(t)$ as a sum of Rogers-Szeg\"o polynomials with coefficients being polynomials in $q$.  We generalize this to a product formula for the multivariate homogeneous polynomials $\tH_n(t_1, \ldots, t_l)$.  The coefficients given in the product formula are polynomials in $q$ which are defined recursively, and we find closed formulas for several interesting cases.  We then reinterpret the product formula in terms of symmetric function theory, where these coefficients become structure constants.\\
\\
\noindent 2010 \emph{Mathematics Subject Classification:  } 05E05 (33D45)\\
\\
\noindent \emph{Key Words and phrases:  } Rogers-Szeg\"o polynomials, symmetric functions, structure constants
\end{abstract}

\section{Introduction}

For any $q \neq 1$, and any positive integer $n$, we let $(q)_n = (1-q) (1-q^2) \cdots (1-q^n)$, $(q)_0 = 1$, and for any non-negative integers $n$ and $r$ with $n \geq r$, we denote the standard $q$-binomial coefficient by $\binom{n}{k}_q = \frac{(q)_n}{(q)_r (q)_{n-r}}$.

The \emph{Rogers-Szeg\"o polynomial} in a single variable, denote $H_n(t)$, is defined to be
$$ H_n(t) = \sum_{r=0}^n \binom{n}{r}_q t^r.$$
The Rogers-Szeg\"o polynomials appeared in the proof of the famous Rogers-Ramanujan identities, in papers of Rogers \cite{Rog1, Rog2}, and were later studied by Szeg\"o as orthogonal polynomials \cite{Sz26}.  One of the key properties which the Rogers-Szeg\"o polynomials satisfy is the following identity giving a way to write the product of two Rogers-Szeg\"o polynomials as a sum of others (see \cite[Example 3.6]{An76}):

\begin{equation} \label{OrigProd}
H_k(t) H_n(t) = \sum_{r=0}^k \binom{k}{r}_q \binom{n}{r}_q (q)_r t^r H_{k+n-2r}(t).
\end{equation}

For any non-negative integers $r_1, \ldots, r_l$, such that $r_1 + \cdots + r_l = n$, define the $q$-multinomial coefficient to be $\binom{n}{r_1, \ldots, r_l}_q = \frac{(q)_n}{(q)_{r_1} \cdots (q)_{r_l}}$.  We then may define a homogeneous multivariate version of the Rogers-Szeg\"o polynomials in $l$ variables, which we define to be
$$ \tH_n(t_1, \ldots, t_l) = \sum_{r_1 + \cdots + r_l = n} \binom{n}{r_1, \ldots, r_l}_q t_1^{r_1} t_2^{r_2} \cdots t_l^{r_l}.$$

In particular, note that $\tH_n(t,1) = H_n(t)$.  The non-homogeneous version of the multivariate Rogers-Szeg\"o polynomial, $H(t_1, \ldots, t_{l-1}) = \tH(t_1, \ldots, t_{l-1}, 1)$, is considered in the book of G. Andrews \cite[Example 3.17]{An76}, while the homogeneous version was initially defined by Rogers \cite{Rog1, Rog2} in terms of their generating function, and some of their basic properties are given in the monograph of N. Fine \cite{Fi88}.  These have also been studied by K. Hikami in the context of mathematical physics \cite{Hi95, Hi97}.

The main result of this paper is a generalization of the product formula (\ref{OrigProd}) to the case of homogeneous multivariate Rogers-Szeg\"o polynomials.  In order to state this result precisely, we need a bit more notation.  First, let $\vec{m} \in \ZZ_{\geq 0}^{l-1}$ be a vector with $l-1$ non-negative integer entries, whose coordinates we write as $\vec{m} = [m_2, m_3, \ldots, m_l]$ (so that the $i$th coordinate is labeled $m_{i+1}$), and let $|\vec{m}| = \sum_{i=2}^l m_i$, and define $\wt(\vec{m}) = \sum_{i=2}^l i m_i$.  For any $i = 1, \ldots, l-1$, let $\vec{u}_i$ denote the unit vector with $1$ in the $i$th coordinate and $0$ elsewhere.  Now, given any $n, k \geq 0$, and any $\vec{m} \in \ZZ_{\geq 0}^{l-1}$, we define polynomials in $q$, $\theta_{\vec{m}, k,n}(q)$, recursively as follows.  If $|\vec{m}| > \mathrm{min} \{ k, n \}$, or $\wt(\vec{m}) > k + n$, define $\theta_{\vec{m}, k, n} = 0$, and define $\theta_{\vec{0}, k, n} = 1$ for any $k,n \geq 0$.  The recursive definition for $\theta_{\vec{m}, k, n} = \theta_{\vec{m}, k, n}(q)$ is then given by
\begin{equation} \label{thetadef}
\theta_{\vec{m}, k+1, n} = \theta_{\vec{m}, k, n} + \sum_{j=1}^{l-1} \binom{n+k - \wt(\vec{m}) + j + 1}{j}_q (q)_j \theta_{\vec{m} - \vec{u}_j,k,n} - \sum_{j=1}^{l-1} \binom{k}{j}_q (q)_j \theta_{\vec{m} - \vec{u}_j, k-j, n},
\end{equation} 
where we take $\theta_{\vec{a}, b, c} = 0$ if any of $b$, $c$, or any coordinate of $\vec{a}$ is negative.

Define $e_i(t_1, \ldots, t_l)$ to be the $i$th elementary symmetric polynomial.  The main result of this paper is Theorem \ref{ProdMain}, which is a product formula generalizing (\ref{OrigProd}), may be stated as follows:
\begin{align*}
\tH_k(t_1, \ldots, t_l) & \tH_n(t_1, \ldots, t_l) \\
                        & = \sum_{\vec{m} \in \ZZ_{\geq 0}^{l-1}} (-1)^{\wt(\vec{m})} \theta_{\vec{m}, k, n}(q) \left(\prod_{i=2}^{l} e_i(t_1, \ldots, t_l)^{m_i} \right) \tH_{k+n - \wt(\vec{m})}(t_1, \ldots, t_l).
\end{align*}

We prove the above formula in Section \ref{ProdSec}, and we also give several properties of the polynomials $\theta_{\vec{m}, k, n}(q)$ there.  In particular, we give several cases of closed formulas for $\theta_{\vec{m}, k, n}(q)$, including the case that $\theta_{r\vec{u}_1, k, n}(q) = \binom{k}{r}_q \binom{n}{r}_q (q)_r$ in Proposition \ref{ru1}, explaining how the generalized product formula implies (\ref{OrigProd}).  In Section \ref{SymmSec}, we reinterpret our results in terms of symmetric functions, giving an interpretation of the polynomials $\theta_{\vec{m}, k, n}(q)$ as structure constants with respect to a linear basis for the graded algebra of symmetric functions over $\ZZ[q]$.
\\
\\
\noindent{\bf Acknowledgments. }  The second-named author was supported by NSF grant DMS-0854849.

\section{The Product Formula} \label{ProdSec}

We begin with a recursion for the homogeneous Rogers-Szeg\"o polynomials $\tH_n(t_1, \ldots, t_l)$, due to Hikami \cite{Hi95, Hi97}.

\begin{prop} \label{RSRec}
Take $\tH_j(t_1, \ldots, t_l) = 0$ for $j <0$, and we have $\tH_0(t_1, \ldots, t_l) = 1$.  For any $n$, we have
$$\tH_{n+1}(t_1, \ldots, t_l) = \sum_{j=0}^{l-1} (-1)^j e_{j+1} (t_1, \ldots, t_l) \binom{n}{j}_q (q)_j \tH_{n-j}(t_1, \ldots, t_l).$$
\end{prop}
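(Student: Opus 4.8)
My plan is to prove the recursion through the generating function of the homogeneous Rogers--Szeg\"o polynomials. Set $a_n = \tH_n(t_1,\ldots,t_l)/(q)_n$. The definition of $\tH_n$ together with the $q$-exponential identity $\sum_{r\geq 0} x^r/(q)_r = 1/(x;q)_\infty$, where $(x;q)_\infty = \prod_{j\geq 0}(1-xq^j)$, gives
$$ F(z) := \sum_{n\geq 0} a_n z^n = \sum_{n\geq 0}\ \sum_{r_1+\cdots+r_l=n}\ \prod_{i=1}^{l}\frac{(t_i z)^{r_i}}{(q)_{r_i}} = \prod_{i=1}^{l}\frac{1}{(t_i z;q)_\infty}. $$
The first thing I would do is record this identity, observing that every manipulation is legitimate in the ring of formal power series in $z$ over $\ZZ[q][t_1,\ldots,t_l]$, since each factor $(t_i z;q)_\infty$ has constant term $1$ and is therefore invertible; no convergence is needed.

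The key step is the functional equation $E(z)F(z) = F(qz)$, where $E(z) = \prod_{j=1}^{l}(1-t_j z) = \sum_{i=0}^{l}(-1)^i e_i(t_1,\ldots,t_l) z^i$ is the generating polynomial of the elementary symmetric polynomials. This follows one factor at a time from the telescoping identity
$$ \frac{1-t_i z}{(t_i z;q)_\infty} = \frac{1}{(t_i z q;q)_\infty}, $$
which holds because $(t_i z;q)_\infty = (1-t_i z)(t_i z q;q)_\infty$. Multiplying over $i=1,\ldots,l$ converts $E(z)F(z)$ into $\prod_{i=1}^{l} 1/(t_i q z;q)_\infty = F(qz)$. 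Spotting this telescoped form of the $q$-product is the only point that takes a moment's thought; everything else is bookkeeping.

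Finally I would compare coefficients of $z^{N}$ in $E(z)F(z) = F(qz)$, which gives $\sum_{i=0}^{l}(-1)^i e_i\, a_{N-i} = q^N a_N$ (with the convention $a_m = 0$ for $m<0$), i.e.\ $(1-q^N)a_N = \sum_{i=1}^{l}(-1)^{i-1}e_i\, a_{N-i}$. Taking $N = n+1$, multiplying by $(q)_n$, and using $(q)_n(1-q^{n+1}) = (q)_{n+1}$ on the left together with $(q)_n/(q)_{n+1-i} = \binom{n}{i-1}_q (q)_{i-1}$ on the right, the reindexing $j = i-1$ produces exactly
$$ \tH_{n+1}(t_1,\ldots,t_l) = \sum_{j=0}^{l-1}(-1)^j e_{j+1}(t_1,\ldots,t_l)\binom{n}{j}_q (q)_j\, \tH_{n-j}(t_1,\ldots,t_l). $$
The stated base cases $\tH_0 = 1$ and $\tH_j = 0$ for $j<0$ are immediate from the definition, so this completes the argument. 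Thus the only genuinely delicate point is the formal-power-series justification of the infinite-product manipulations, and that is routine.
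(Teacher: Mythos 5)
Your generating-function argument is correct and complete. Note, however, that the paper does not actually prove Proposition \ref{RSRec}: it is quoted from Hikami's papers \cite{Hi95, Hi97} with no in-text argument, so there is no internal proof to compare against. Your route --- establishing $F(z)=\sum_{n\geq 0}\tH_n(t_1,\ldots,t_l)z^n/(q)_n=\prod_{i=1}^{l}1/(t_iz;q)_\infty$ via the $q$-exponential identity and then extracting the recursion from the functional equation $\prod_{i=1}^{l}(1-t_iz)\,F(z)=F(qz)$ --- is the natural one, since Rogers originally defined the homogeneous polynomials by exactly this generating function; the coefficient comparison, together with $(q)_n(1-q^{n+1})=(q)_{n+1}$ and $(q)_n/(q)_{n+1-i}=\binom{n}{i-1}_q(q)_{i-1}$, lands precisely on the stated identity, and the degenerate terms with $n+1-i<0$ are correctly killed by the convention $\tH_m=0$ for $m<0$. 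The one small correction: the coefficients $a_n=\tH_n/(q)_n$ do not lie in $\ZZ[q][t_1,\ldots,t_l]$, so the formal-power-series manipulations should be carried out over a coefficient ring in which each $1-q^j$ is invertible (for instance $\ZZ[[q]]$ or $\mathbb{Q}(q)$, adjoining $t_1,\ldots,t_l$), not over $\ZZ[q][t_1,\ldots,t_l]$ as you state; since the final identity is an equality of polynomials, it then descends to $\ZZ[q][t_1,\ldots,t_l]$. This does not affect the validity of the argument.
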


Using the recursion in Proposition \ref{RSRec}, and the recursive definition (\ref{thetadef}) of the polynomials $\theta_{\vec{m}, n, k}(q)$, we may prove our main result.

\begin{theorem} \label{ProdMain}  For any $k, n \geq 0$, we have
\begin{align*}
\tH_k(t_1, \ldots, t_l) & \tH_n(t_1, \ldots, t_l) \\
                        & = \sum_{\vec{m} \in \ZZ_{\geq 0}^{l-1}} (-1)^{\wt(\vec{m})} \theta_{\vec{m}, k, n}(q) \left(\prod_{i=2}^{l} e_i(t_1, \ldots, t_l)^{m_i} \right) \tH_{k+n - \wt(\vec{m})}(t_1, \ldots, t_l).
\end{align*}
\end{theorem}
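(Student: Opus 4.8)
The plan is to prove the identity by (strong) induction on $k$, with $n$ fixed, since the recursion (\ref{thetadef}) for $\theta_{\vec{m},k,n}$ increments precisely the index $k$. Throughout, abbreviate $e_i = e_i(t_1,\ldots,t_l)$ and write
$$P_{k,n} = \sum_{\vec{m}\in\ZZ_{\geq 0}^{l-1}}(-1)^{\wt(\vec{m})}\,\theta_{\vec{m},k,n}(q)\Big(\prod_{i=2}^l e_i^{m_i}\Big)\tH_{k+n-\wt(\vec{m})}$$
for the claimed right-hand side; the goal is $\tH_k\tH_n = P_{k,n}$. For $k<0$ both sides vanish by the stated conventions. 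For $k=0$ the only surviving term of $P_{0,n}$ is $\vec{m}=\vec{0}$, because $\theta_{\vec{m},0,n}=0$ whenever $|\vec{m}|>\min\{0,n\}=0$ while $\theta_{\vec{0},0,n}=1$; hence $P_{0,n}=\tH_n=\tH_0\tH_n$. Now fix $k\geq 0$ and assume $\tH_{k'}\tH_n=P_{k',n}$ for every $k'<k+1$.

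For the inductive step, apply Proposition \ref{RSRec} to $\tH_{k+1}$, separating the $j=0$ summand (which equals $e_1\tH_k$), and multiply by $\tH_n$:
$$\tH_{k+1}\tH_n = e_1\,\tH_k\tH_n + \sum_{j=1}^{l-1}(-1)^j e_{j+1}\binom{k}{j}_q(q)_j\,\tH_{k-j}\tH_n.$$
Applying the induction hypothesis (at $k$ and at $k-1,\ldots,k-(l-1)$) turns the right side into
$$e_1 P_{k,n} + \sum_{j=1}^{l-1}(-1)^j e_{j+1}\binom{k}{j}_q(q)_j\, P_{k-j,n}.$$
The term $e_1 P_{k,n}$ still involves the polynomial $e_1$, which does not appear in the target formula, so I eliminate it by running Proposition \ref{RSRec} in reverse: $e_1\tH_M = \tH_{M+1}-\sum_{j=1}^{l-1}(-1)^j e_{j+1}\binom{M}{j}_q(q)_j\tH_{M-j}$, used with $M=k+n-\wt(\vec{m})$ for each $\vec{m}$ appearing in $P_{k,n}$ (note $M\geq 0$ whenever $\theta_{\vec{m},k,n}\neq 0$).

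The rest is bookkeeping. Multiplying a monomial $\prod_{i=2}^l e_i^{m_i}$ by $e_{j+1}$ sends $\vec{m}\mapsto\vec{m}+\vec{u}_j$; since $\wt(\vec{m}+\vec{u}_j)=\wt(\vec{m})+j+1$, this yields the index match $k+n-\wt(\vec{m})-j=(k+1)+n-\wt(\vec{m}+\vec{u}_j)$, the sign rule $(-1)^{\wt(\vec{m})}(-1)^j=-(-1)^{\wt(\vec{m}+\vec{u}_j)}$, and the identity $\binom{k+n-\wt(\vec{m})}{j}_q=\binom{n+k-\wt(\vec{m}+\vec{u}_j)+j+1}{j}_q$. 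Substituting the reversed recursion into $e_1 P_{k,n}$ and re-indexing each sum by the shifted vector, the three families of terms — the bare $\tH_{M+1}$ coming from $e_1 P_{k,n}$, the $e_{j+1}$-corrections coming from $e_1 P_{k,n}$, and the sum $\sum_{j}(-1)^j e_{j+1}\binom{k}{j}_q(q)_j P_{k-j,n}$ — collect, term by term in $\vec{m}$, into
$$\theta_{\vec{m},k,n}+\sum_{j=1}^{l-1}\binom{n+k-\wt(\vec{m})+j+1}{j}_q(q)_j\,\theta_{\vec{m}-\vec{u}_j,k,n}-\sum_{j=1}^{l-1}\binom{k}{j}_q(q)_j\,\theta_{\vec{m}-\vec{u}_j,k-j,n},$$
which equals $\theta_{\vec{m},k+1,n}$ by (\ref{thetadef}); the common prefactor is $(-1)^{\wt(\vec{m})}\prod_{i}e_i^{m_i}$ and the common polynomial $\tH_{(k+1)+n-\wt(\vec{m})}$. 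This gives $\tH_{k+1}\tH_n=P_{k+1,n}$ and closes the induction. I expect the only real difficulty to be notational: tracking the signs $(-1)^{\wt(\vec{m})}$ through the shifts $\vec{m}\mapsto\vec{m}\pm\vec{u}_j$, and matching the $q$-binomial $\binom{n+k-\wt(\vec{m}-\vec{u}_j)}{j}_q$ produced by the reversed recursion with the middle term of (\ref{thetadef}); the boundary conventions on $\theta_{\vec{m},k,n}$ (vanishing when $|\vec{m}|>\min\{k,n\}$, when $\wt(\vec{m})>k+n$, or when an index is negative) are precisely what make these substitutions and re-indexings valid, in particular ensuring $M=k+n-\wt(\vec{m})\geq 0$ wherever a term survives.
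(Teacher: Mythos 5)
Your proposal is correct and follows essentially the same route as the paper's proof: induction on $k$ via the Hikami recursion of Proposition \ref{RSRec}, applied forward to expand $\tH_{k+1}$ and in reverse to eliminate the stray $e_1$ factor, followed by the shift $\vec{m} \mapsto \vec{m} \pm \vec{u}_j$ to match the three families of terms against the recursion (\ref{thetadef}). The sign and index bookkeeping you describe ($\wt(\vec{m}+\vec{u}_j) = \wt(\vec{m})+j+1$, the $q$-binomial identification, and the boundary conventions on $\theta$) is exactly what the paper carries out.
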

\begin{proof} 
To simplify notation, we will suppress the variables $t_1, \ldots, t_l$, so that $e_j = e_j(t_1, \ldots, t_l)$ and $\tH_j = \tH_j(t_1, \ldots, t_l)$.  
Let $n \geq 0$.  Then $\tH_0 \tH_n = \tH_n$.  Since $\theta_{\vec{m},0,n} = 0$ whenever $\vec{m} \neq \vec{0}$, and $\theta_{\vec{0}, 0, 0} = 1$, the statement holds for $k=0$.  Now fix $k \geq 0$, and assume the statement holds for all indices $i \leq k$, so holds for products $\tH_i \tH_n$ for $0 \leq i \leq k$ and all $n \geq 0$.  Consider the product $\tH_{k+1} \tH_n$.  By the recursion in Proposition \ref{RSRec}, we have
$$\tH_{k+1} \tH_n = \left( \sum_{j=0}^{l-1} (-1)^j e_{j+1} \binom{k}{j}_q (q)_j \tH_{k-j} \right) \tH_n.$$
We may apply the induction hypothesis to each of the products $\tH_{k-j} \tH_n$, so that we have
\begin{align} \label{firsteqn}
\tH_{k+1}  \tH_n & = \sum_{j=0}^{l-1} (-1)^j e_{j+1} \binom{k}{j}_q (q)_j \left(\sum_{\vec{m} \in \ZZ_{\geq 0}^{l-1}} (-1)^{\wt(\vec{m})} \theta_{\vec{m}, k-j, n} \left(\prod_{i=2}^{l} e_i^{m_i} \right) \tH_{k-j+n - \wt(\vec{m})} \right) \notag \\
 & = \sum_{\vec{m} \in \ZZ_{\geq 0}^{l-1}}(-1)^{\wt(\vec{m})} \theta_{\vec{m}, k, n} \left(\prod_{i=2}^{l} e_i^{m_i} \right) e_1 \tH_{k+n-\wt(\vec{m})} \notag \\
 & \;\; + \sum_{j=1}^{l-1} \sum_{\vec{m} \in \ZZ_{\geq 0}^{l-1}} (-1)^{j + \wt(\vec{m})}\theta_{\vec{m}, k-j, n} \binom{k}{j}_q (q)_j e_{j+1}^{m_{j+1} + 1} \left( \prod_{2 \leq i \leq l \atop{i \neq j+1}} e_i^{m_i} \right) \tH_{k-j+n - \wt(\vec{m})}.
\end{align}
From Proposition \ref{RSRec}, we have 
$$e_1 \tH_{k+n - \wt(\vec{m})} = \tH_{k+n + 1 - \wt(\vec{m})} + \sum_{j=1}^{l-1} (-1)^{j+1} e_{j+1} \binom{k+n - \wt(\vec{m})}{j}_q (q)_j \tH_{k+n-j-\wt(\vec{m})}.$$
Subbing this into (\ref{firsteqn}), we obtain
\begin{align*} 
\tH&_{k+1} \tH_n = \\
& \sum_{\vec{m} \in \ZZ_{\geq 0}^{l-1}}(-1)^{\wt(\vec{m})} \theta_{\vec{m}, k, n} \left(\prod_{i=2}^l e_i^{m_i} \right) \tH_{k+n+1 - \wt(\vec{m})} \\ 
& \quad + \sum_{j=1}^{l-1} \sum_{\vec{m} \in \ZZ_{\geq 0}^{l-1}} (-1)^{j + 1 + \wt(\vec{m})} \theta_{\vec{m}, k, n} \binom{k+n - \wt(\vec{m})}{j}_q (q)_j e_{j+1}^{m_{j+1} + 1} \left( \prod_{2 \leq i \leq l \atop{i \neq j+1}} e_i^{m_i} \right) \tH_{k+n-j-\wt(\vec{m})}  \\
& \quad + \sum_{j=1}^{l-1} \sum_{\vec{m} \in \ZZ_{\geq 0}^{l-1}} (-1)^{j + \wt(\vec{m})}\theta_{\vec{m}, k-j, n} \binom{k}{j}_q (q)_j e_{j+1}^{m_{j+1} + 1} \left( \prod_{2 \leq i \leq l \atop{i \neq j+1}} e_i^{m_i} \right) \tH_{k-j+n - \wt(\vec{m})}.
\end{align*}
In the second and third sums above, we shift the index by replacing $\vec{m}$ with $\vec{m} - \vec{u}_j$.  Since we define $\theta_{\vec{a}, b, c} = 0$ if any coordinate of $\vec{a}$ is negative, this does not alter the terms which occur in the sum.  Note also that $\wt(\vec{m} - \vec{u}_j) = \wt(\vec{m}) -j -1$.  So, after this re-indexing, we have
\begin{align*} 
\tH&_{k+1} \tH_n = \sum_{\vec{m} \in \ZZ_{\geq 0}^{l-1}}(-1)^{\wt(\vec{m})} \theta_{\vec{m}, k, n} \left(\prod_{i=2}^l e_i^{m_i} \right) \tH_{k+n+1 - \wt(\vec{m})} \\ 
& + \sum_{j=1}^{l-1} \sum_{\vec{m} \in \ZZ_{\geq 0}^{l-1}} (-1)^{\wt(\vec{m})} \theta_{\vec{m}-\vec{u}_j, k, n} \binom{k+n - \wt(\vec{m})+j + 1}{j}_q (q)_j \left( \prod_{i=2}^l e_i^{m_i} \right) \tH_{k+n+1-\wt(\vec{m})}  \\
& - \sum_{j=1}^{l-1} \sum_{\vec{m} \in \ZZ_{\geq 0}^{l-1}} (-1)^{\wt(\vec{m})}\theta_{\vec{m} - \vec{u}_j, k-j, n} \binom{k}{j}_q (q)_j \left( \prod_{i=2}^l e_i^{m_i} \right) \tH_{k+n+1 - \wt(\vec{m})}.
\end{align*}
By the recursive definition (\ref{thetadef}) of $\theta_{\vec{m}, k, n}$, we may write the above as
$$\tH_{k+1} \tH_n = \sum_{\vec{m} \in \ZZ_{\geq 0}^{l-1}} (-1)^{\wt(\vec{m})} \theta_{\vec{m}, k+1, n} \left(\prod_{i=2}^l e_i^{m_i} \right) \tH_{k+1+n - \wt(\vec{m})}, $$
completing the induction.
\end{proof}

Although we do not have a closed formula for the polynomials $\theta_{\vec{m}, k, n}(q)$ in general, we do have closed expressions for several interesting cases.

\begin{prop} \label{uj} For any $j \geq 1$, $n, k \geq 0$, we have
$$ \theta_{\vec{u}_j, k, n}(q) = (q)_j \sum_{i = 0}^{k-1} \left[\binom{n+i}{j}_q - \binom{i}{j}_q \right].$$
\end{prop}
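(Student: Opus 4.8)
The plan is to induct on $k$, with $j \geq 1$ and $n \geq 0$ held fixed, using the recursive definition (\ref{thetadef}) specialized to $\vec{m} = \vec{u}_j$. For the base case $k = 0$ the right-hand side is an empty sum, hence $0$; and indeed $\theta_{\vec{u}_j, 0, n} = 0$ directly from the definition, since $|\vec{u}_j| = 1 > \min\{0,n\}$.

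The key point is that the recursion collapses dramatically for $\vec{m} = \vec{u}_j$. In (\ref{thetadef}), the terms involving $\theta_{\vec{u}_j - \vec{u}_{j'}, \cdot, n}$ with $j' \neq j$ all vanish, because $\vec{u}_j - \vec{u}_{j'}$ then has a negative coordinate; only the $j' = j$ terms survive, and there $\vec{u}_j - \vec{u}_j = \vec{0}$, so $\theta_{\vec{0}, \cdot, n} = 1$. Since $\wt(\vec{u}_j) = j+1$, the $q$-binomial in the middle sum becomes $\binom{n + k - (j+1) + j + 1}{j}_q = \binom{n+k}{j}_q$, and in the last sum $\binom{k}{j}_q$ already vanishes for $k < j$, so the (possibly negative-index) value $\theta_{\vec{0}, k-j, n}$ never causes trouble and contributes a factor $1$. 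Thus the recursion reduces to
$$\theta_{\vec{u}_j, k+1, n} = \theta_{\vec{u}_j, k, n} + (q)_j\binom{n+k}{j}_q - (q)_j\binom{k}{j}_q = \theta_{\vec{u}_j, k, n} + (q)_j\left[\binom{n+k}{j}_q - \binom{k}{j}_q\right].$$

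To finish, I substitute the inductive hypothesis $\theta_{\vec{u}_j, k, n} = (q)_j \sum_{i=0}^{k-1}\left[\binom{n+i}{j}_q - \binom{i}{j}_q\right]$ into this identity and absorb the new term as the $i = k$ summand, which yields the claimed formula for $\theta_{\vec{u}_j, k+1, n}$ and completes the induction. I do not expect a genuine obstacle here: the only delicate point is the bookkeeping around the conventions $\binom{k}{j}_q = 0$ for $k < j$ and $\theta_{\vec{a}, b, c} = 0$ for negative indices, and these happen to align so that the $j' = j$, negative-index situation is harmless.
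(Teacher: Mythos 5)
Your proposal is correct and follows essentially the same route as the paper: induction on $k$, the base case from $|\vec{u}_j| > \min\{0,n\}$, and the observation that the recursion (\ref{thetadef}) collapses at $\vec{m} = \vec{u}_j$ to $\theta_{\vec{u}_j,k+1,n} = \theta_{\vec{u}_j,k,n} + (q)_j\bigl[\binom{n+k}{j}_q - \binom{k}{j}_q\bigr]$ since $\wt(\vec{u}_j)=j+1$. Your extra care about the conventions for negative coordinates and $\binom{k}{j}_q=0$ when $k<j$ is sound and only makes explicit what the paper leaves implicit.
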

\begin{proof} The statement holds whenever $k=0$, since then $1 = |\vec{u}_j| > \mathrm{min} \{k,n\} = 0$, so $\theta_{\vec{u}_j, 0, n} = 0$ by definition.  Suppose the statement holds for $k$, for any $j \geq 1$, $n \geq 0$.  From the recursive definition (\ref{thetadef}), and from $\wt(\vec{u}_j) = j+1$, we then have
\begin{align*}
\theta_{\vec{u}_j, k+1, n} & = \theta_{\vec{u}_j, k, n} + (q)_j \binom{n+k}{j}_q - (q)_j \binom{k}{j}_q \\
& = (q)_j \sum_{i=0}^k \left[\binom{n+i}{j}_q - \binom{i}{j}_q\right],
\end{align*}
which completes the proof.
\end{proof}

The following is a special case with a slightly more involved argument.

\begin{prop} \label{|m|=k} Suppose $\vec{m} \in \ZZ^{l-1}_{\geq 0}$, $k, n \geq 0$, such that $|\vec{m}| = m_2 + \cdots + m_l = k$.  Then 
$$ \theta_{\vec{m}, k, n}(q) = \binom{k}{m_2, m_3, \ldots, m_l} \binom{n}{\wt(\vec{m}) - k}_q (q)_{\wt(\vec{m}) - k}.$$
\end{prop}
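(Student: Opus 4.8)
The plan is to induct on $k$, substituting the induction hypothesis into the recursion (\ref{thetadef}) and simplifying, in the spirit of the proof of Proposition~\ref{uj}. First I would clear away the degenerate ranges. Since each coordinate of $\vec{m}$ is weighted by at least $2$, one always has $\wt(\vec{m}) \geq 2|\vec{m}| = 2k$, so $\wt(\vec{m}) - k \geq k \geq 0$; hence the $q$-binomial on the right-hand side is meaningful exactly when $\wt(\vec{m}) \leq n + k$, and when $\wt(\vec{m}) > n + k$ both sides are zero (indeed $\theta_{\vec{m}, k, n} = 0$ by definition, and $\binom{n}{\wt(\vec{m}) - k}_q$ is read as $0$). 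Note that $\wt(\vec{m}) \leq n + k$ already forces $n \geq k$. The base case $k = 0$ forces $\vec{m} = \vec{0}$, and both sides equal $1$.

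For the inductive step I would fix $\vec{m}$ with $|\vec{m}| = k+1$ and $\wt(\vec{m}) \leq n + k + 1$, set $a = \wt(\vec{m}) - (k+1)$ so that $k+1 \leq a \leq n$, and apply (\ref{thetadef}) at level $k+1$. The leading term $\theta_{\vec{m}, k, n}$ drops out because $|\vec{m}| = k+1 > k \geq \min\{k,n\}$, and every term $\theta_{\vec{m} - \vec{u}_j, k-j, n}$ of the subtracted sum drops out because either a coordinate of $\vec{m} - \vec{u}_j$ is negative or else $|\vec{m} - \vec{u}_j| = k > k - j \geq \min\{k-j, n\}$ (using $j \geq 1$). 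Using $n + k - \wt(\vec{m}) + j + 1 = n - a + j$, what remains is
\[ \theta_{\vec{m}, k+1, n} = \sum_{j=1}^{l-1} \binom{n - a + j}{j}_q (q)_j \, \theta_{\vec{m} - \vec{u}_j, k, n}. \]

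Next I would apply the induction hypothesis to each surviving summand. For $j$ with $m_{j+1} \geq 1$ one has $|\vec{m} - \vec{u}_j| = k$ and $\wt(\vec{m} - \vec{u}_j) - k = a - j$, and the inequality $\wt(\vec{m}) \geq (j-1)m_{j+1} + 2(k+1) \geq j + 2k + 1$ gives $k \leq a - j < a \leq n$, so the hypothesis reads $\theta_{\vec{m} - \vec{u}_j, k, n} = \binom{k}{m_2, \ldots, m_{j+1} - 1, \ldots, m_l} \binom{n}{a-j}_q (q)_{a-j}$ (the terms with $m_{j+1} = 0$ contribute $0$ to both sides). The calculation then turns on the $j$-independent cancellation of $q$-factorials
\[ \binom{n - a + j}{j}_q (q)_j \binom{n}{a-j}_q (q)_{a-j} = \frac{(q)_{n-a+j}}{(q)_{n-a}} \cdot \frac{(q)_n}{(q)_{n-a+j}} = \frac{(q)_n}{(q)_{n-a}}, \]
valid since $0 \leq a - j \leq n$. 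Factoring this constant out of the sum leaves $\sum_{j:\, m_{j+1} \geq 1} \binom{k}{m_2, \ldots, m_{j+1} - 1, \ldots, m_l}$, which equals $\binom{k+1}{m_2, \ldots, m_l}$ by the ordinary Pascal-type recursion for multinomial coefficients; hence $\theta_{\vec{m}, k+1, n} = \binom{k+1}{m_2, \ldots, m_l} (q)_n / (q)_{n-a} = \binom{k+1}{m_2, \ldots, m_l} \binom{n}{a}_q (q)_a$, which is exactly the claimed formula at level $k+1$. I expect the only genuine obstacle to be the range bookkeeping — confirming that the arguments of all the $q$-binomials stay legal and that every term declared to vanish really does so under the conventions of (\ref{thetadef}) — for which the weight estimate $\wt(\vec{m}) \geq (j-1)m_{j+1} + 2(k+1)$ (when $m_{j+1} \geq 1$) is the crucial input.
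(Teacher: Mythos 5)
Your proposal is correct and follows essentially the same route as the paper's proof: induct on $k=|\vec{m}|$, observe that $\theta_{\vec{m},k,n}$ and all $\theta_{\vec{m}-\vec{u}_j,k-j,n}$ vanish so the recursion (\ref{thetadef}) collapses to the single sum over $j$, cancel the $q$-factorials to a $j$-independent factor $\binom{n}{\wt(\vec{m})-k-1}_q(q)_{\wt(\vec{m})-k-1}$, and finish with the Pascal recursion for multinomial coefficients. The only difference is cosmetic (the substitution $a=\wt(\vec{m})-(k+1)$ and some extra range bookkeeping that the paper leaves implicit).
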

\begin{proof} First, denote the standard multinomial coefficient $\binom{k}{r_2, r_3, \ldots, r_l}$ by $\binom{k}{\vec{r}}$, where $\vec{r} = [r_2, r_3, \ldots, r_l]$ and $|\vec{r}| = r_2 + r_3 + \cdots + r_l = k$.  If any $r_j < 0$, define $\binom{k}{\vec{r}} = 0$.

The proof is by induction on $|\vec{m}|$, where if $|\vec{m}| = 0$, then $\vec{m} = \vec{0}$, and $\theta_{\vec{0}, k, n} = 1$ for any $k, n \geq 0$ by definition.  In particular, when $k = |\vec{0}| = 0$, the claimed formula also yields $1$.  

Assuming the formula holds when $|\vec{m}| = k$, suppose that $|\vec{m}| = k+1$, and recall from definition that if $|\vec{m}| > \mathrm{min} \{ h,n \}$, then $\theta_{\vec{m}, h, n}(q) = 0$.  In particular, if $|\vec{m}| = k+1$, then $\theta_{\vec{m}, k, n} = 0$, and $\theta_{\vec{m} - \vec{u}_j, k-j, n} = 0$ for any $j = 1, \ldots, l-1$.  So, applying the recursion (\ref{thetadef}), we obtain
$$ \theta_{\vec{m}, k+1, n}(q) = \sum_{j=1}^{l-1} \binom{n + k - \wt(\vec{m}) + j + 1}{j}_q (q)_j \theta_{\vec{m} - \vec{u}_j, k, n},$$
where, if $m_{j+1} \neq 0$, then $|\vec{m} - \vec{u}_j| = k$, and otherwise $\theta_{\vec{m} - \vec{u}_j, k, n} = 0$, and $\binom{k}{\vec{m}-\vec{u}_j} = 0$.  So, applying this observation and the induction hypothesis, we obtain
$$ \theta_{\vec{m}, k+1, n}(q) = \sum_{j=1}^{l-1} \binom{n+k - \wt(\vec{m}) + j+ 1}{j}_q (q)_j \binom{k}{\vec{m} - \vec{u}_j} \binom{n}{\wt(\vec{m} - \vec{u}_j) - k}_q (q)_{\wt(\vec{m} - \vec{u}_j) - k}.$$
Noting that $\wt(\vec{m} - \vec{u}_j) = \wt(\vec{m}) - j - 1$, we have
\begin{align*}
\binom{n + k - \wt(\vec{m}) + j + 1}{j}_q & \binom{n}{\wt(\vec{m}) - j - k -1}_q (q)_j (q)_{\wt(\vec{m}) - j - k - 1} \\
 & = \binom{n}{\wt(\vec{m}) - k - 1}_q (q)_{\wt(\vec{m}) - k - 1}.
\end{align*}
So,
$$\theta_{\vec{m}, k+1, n}(q) = \binom{n}{\wt(\vec{m}) - k - 1}_q (q)_{\wt(\vec{m}) - k - 1} \sum_{j=1}^{l-1} \binom{k}{\vec{m} - \vec{u}_j}.$$
Recalling the Pascal recursion for multinomial coefficients, $\binom{k+1}{\vec{m}} = \sum_{j=1}^{l-1} \binom{k}{\vec{m} - \vec{u}_j}$, completes the induction argument.
\end{proof}

Finally, we have the following, which explains how Theorem \ref{ProdMain} implies the classical product formula (\ref{OrigProd}).

\begin{prop} \label{ru1} For any $k, n, r \geq 0$, we have
$$ \theta_{r\vec{u}_1, k, n}(q) = \binom{k}{r}_q \binom{n}{r}_q (q)_r.$$
\end{prop}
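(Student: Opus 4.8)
The plan is to specialize the defining recursion (\ref{thetadef}) to $\vec m = r\vec u_1 = [r, 0, \ldots, 0]$, for which $|\vec m| = r$ and $\wt(\vec m) = 2r$, and then to induct on $k$. Since $r\vec u_1 - \vec u_j$ has a negative coordinate for every $j \geq 2$, only the $j = 1$ terms survive in (\ref{thetadef}), and using $\binom{m}{1}_q (q)_1 = 1 - q^m$ the recursion collapses to the three-term relation
$$\theta_{r\vec u_1, k+1, n} = \theta_{r\vec u_1, k, n} + (1 - q^{\,n+k-2r+2})\,\theta_{(r-1)\vec u_1, k, n} - (1 - q^{\,k})\,\theta_{(r-1)\vec u_1, k-1, n}.$$

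First I would set up the induction on $k$ (with $n \geq 0$ arbitrary), where the statement at stage $k$ is that $\theta_{r\vec u_1, k, n} = \binom{k}{r}_q \binom{n}{r}_q (q)_r$ for all $r \geq 0$. The base case $k = 0$ is immediate: $\theta_{\vec 0, 0, n} = 1$ matches, and for $r \geq 1$ we have $|r\vec u_1| = r \geq 1 > \min\{0, n\}$, so $\theta_{r\vec u_1, 0, n} = 0 = \binom{0}{r}_q\binom{n}{r}_q(q)_r$. For the inductive step I would substitute the claimed closed forms (at stages $k$ and $k-1$) into the three-term relation above. The case $r = 0$ gives $\theta_{\vec 0, k+1, n} = \theta_{\vec 0, k, n} = 1$ trivially, so assume $r \geq 1$. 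Factoring $\binom{n}{r-1}_q (q)_{r-1}$ out of every term, via the identity $\binom{n}{r}_q (q)_r = (1 - q^{\,n-r+1}) \binom{n}{r-1}_q (q)_{r-1}$ (immediate from $\binom{a}{b}_q (q)_b = (q)_a / (q)_{a-b}$), reduces the desired equality $\theta_{r\vec u_1, k+1, n} = \binom{k+1}{r}_q\binom{n}{r}_q(q)_r$ to the purely $q$-binomial identity
$$(1 - q^{\,n-r+1})\binom{k}{r}_q + (1 - q^{\,n+k-2r+2})\binom{k}{r-1}_q - (1 - q^{\,k})\binom{k-1}{r-1}_q = (1 - q^{\,n-r+1})\binom{k+1}{r}_q.$$

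The last identity I would verify directly: apply the $q$-Pascal rule $\binom{k+1}{r}_q = \binom{k}{r-1}_q + q^{r}\binom{k}{r}_q$ to the right-hand side, collect the two $\binom{k}{r-1}_q$ terms into $q^{\,n-r+1}(1 - q^{\,k-r+1})\binom{k}{r-1}_q$, and then use the elementary consequences of the definition of $\binom{\cdot}{\cdot}_q$, namely $(1 - q^{\,k-r+1})\binom{k}{r-1}_q = (1 - q^{\,k})\binom{k-1}{r-1}_q$ and $(1 - q^{\,r})\binom{k}{r}_q = (1 - q^{\,k})\binom{k-1}{r-1}_q$; after these substitutions everything cancels. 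I do not expect a genuine obstacle here: the whole argument is bookkeeping with $q$-binomial coefficients. The two mild points worth attention are that the explicit $n$-dependence of the recursion (through $q^{\,n+k-2r+2}$) must be seen to recombine cleanly with the $\binom{n}{\cdot}_q$ factors—which is exactly what the factoring step above accomplishes—and that the resulting formula is consistent with the vanishing conventions for $\theta$: when $r > \min\{k,n\}$ both sides vanish because one of $\binom{k}{r}_q$, $\binom{n}{r}_q$ does, and when $2r = \wt(r\vec u_1) > k+n$ this forces $r > k$ or $r > n$, so again both sides vanish. Finally, with the value of $\theta_{r\vec u_1, k, n}$ in hand, specializing Theorem \ref{ProdMain} to $l = 2$ and then setting $t_2 = 1$, $t_1 = t$ turns the product formula into exactly (\ref{OrigProd}).
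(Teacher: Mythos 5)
Your proposal is correct and follows essentially the same route as the paper: specialize the recursion (\ref{thetadef}) to $\vec m = r\vec u_1$ so only the $j=1$ terms survive, induct on $k$, substitute the closed form, and reduce to a $q$-binomial identity checked by direct computation (your factoring out $\binom{n}{r-1}_q(q)_{r-1}$ rather than $\binom{n}{r}_q(q)_r$ merely avoids the denominators $1-q^{n-r+1}$ that appear in the paper's version of the same calculation). Your explicit verification of the identity via the $q$-Pascal rule, and your check of consistency with the vanishing conventions, are both sound.
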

\begin{proof} By definition, the statement holds if $r=0$, or for any $r > 1$,$ n \geq 0$ if $k =0$, since then $r = |r \vec{u}_1| > \mathrm{min} \{k,n\} = 0$, and $\binom{k}{r}_q = 0$ by definition.  Assuming the statement holds for $k$, then from (\ref{thetadef}), we have
\begin{align*}
\theta_{r\vec{u}_1, k+1, n} & = \theta_{r \vec{u}_1, k, n} + (q)_1 \binom{n+k-2r+2}{1}_q \theta_{(r-1)\vec{u}_1, k, n} - (q)_1 \binom{k}{1}_q \theta_{(r-1)\vec{u}_1, k-1, n} \\
 & = \binom{k}{r}_q \binom{n}{r}_q (q)_r  + (1 - q^{n+k-2r+2}) \binom{n}{r-1}_q \binom{k}{r-1}_q (q)_{r-1} \\
 & \quad \quad \quad - (1 - q^k) \binom{n}{r-1}_q \binom{k-1}{r-1}_q (q)_{r-1} \\
 & = \binom{n}{r}_q (q)_r \left( \binom{k}{r}_q + \frac{1 - q^{n+k-2r +2}}{1 - q^{n-r+1}} \binom{k}{r-1}_q - \frac{1 - q^k}{1 - q^{n-r+1}} \binom{k-1}{r-1}_q \right) \\
& = \binom{k+1}{r}_q \binom{n}{r}_1 (q)_r \left( \frac{1 - q^{k+1-r}}{1 - q^{k+1}} + \frac{(1-q^r)(1 - q^{n+k-2r+2})}{(1 - q^{k+1})(1 - q^{n-r+1})} \right. \\
& \quad \quad \quad \left. -\frac{(1-q^r)(1 - q^{k+1 - r})}{(1 - q^{k+1})(1 - q^{n-r+1})} \right) \\
& = \binom{k+1}{r}_q \binom{n}{r}_q (q)_r,
\end{align*}
where the last step is a direct computation.  This completes the induction.
\end{proof}

We would like other properties of the polynomials $\theta_{\vec{m}, k, n}(q)$ which further characterize them.  For example, since $\tH_k \tH_n = \tH_n \tH_k$, then one might expect that $\theta_{\vec{m}, k, n} = \theta_{\vec{m}, n, k}$.  In fact, with a somewhat tedious proof, one may obtain this fact directly from the recursive definition (\ref{thetadef}).  However, we prove this statement another way in the next section by a reinterpretation of Theorem \ref{ProdMain} in terms of bases of symmetric functions.

\section{Rogers-Szeg\"o Symmetric Functions} \label{SymmSec}

Recall that a symmetric polynomial $f \in \ZZ[t_1, \ldots, t_n]$ is a polynomial which is invariant under the action of the symmetric group $S_n$ permuting the variables.  Let $\Lambda_n$ denote the ring of symmetric polynomials in $\ZZ[t_1, \ldots, t_n]$, so $\Lambda_n = \ZZ[t_1, \ldots, t_n]^{S_n}$, and $\Lambda_n$ is also a $\ZZ$-module.

As in \cite[I.2]{Ma95}, let $\Lambda_n^k$ denote the submodule of $\Lambda_n$ consisting of homogeneous symmetric polynomials of degree $k$.  For any $m > n$, we may map an element $f(t_1, \ldots, t_m) \in \Lambda_n^k$ by sending $t_{n+1}, \ldots, t_m$ to $0$, which gives a system of projective maps 
$$p_{m,n}^k: \Lambda_m^k \rightarrow \Lambda_n^k,$$
from which we may form the inverse limit
$$ \Lambda^k = \lim_{\longleftarrow} \Lambda_n^k.$$
We then define the ring $\Lambda$ of symmetric functions over $\ZZ$ in the countably infinite set of variables $T = \{t_1, t_2, \ldots \}$ to be the direct sum
$$ \Lambda = \bigoplus_k \Lambda^k,$$
which is then a graded $\ZZ$-algebra.

Given an indeterminate $q$, we may define $\Lambda[q]$ by either the tensor product
$$\Lambda[q] = \Lambda \otimes_{\ZZ} \ZZ[q],$$
or, if we define $\Lambda[q]_m^k =\displaystyle \Lambda_m^k \otimes_{\ZZ} \ZZ[q]$, and extend the projective system as above to define $\Lambda[q]^k = \displaystyle \lim_{\longleftarrow} \Lambda[q]_m^k$, we can then equivalently define 
$$\Lambda[q] = \bigoplus_k \Lambda[q]^k.$$

We now note that the homogeneous Rogers-Szeg\"o polynomial indeed satisfies
$$ \tH_n(t_1, \ldots, t_l, 0, 0, \ldots, 0) = \tH_n(t_1, \ldots, t_l),$$
where the polynomial on the left has any number of variables more than $l$.  So, the homogeneous Rogers-Szeg\"o polynomial has an image in the graded $\ZZ[q]$-algebra $\Lambda[q]$ as described above.  We denote this image as $\tH_n(T) = \tH_n$, and call it the \emph{Rogers-Szeg\"o symmetric function}.

There is a large number of linear bases of $\Lambda$ as a $\ZZ$-module (or $\Lambda[q]$ as a $\ZZ[q]$-module) which are of interest.  In general, such bases are parameterized by the set $\cP$ of partitions of non-negative integers, where each $\Lambda^k$ has a basis of partitions of size $k$.  If $\lambda \in \cP$, we denote $\lambda$ as either $\lambda = (\lambda_1, \lambda_2, \ldots)$, where $\lambda_i \geq \lambda_{i+1}$ and $\sum_i \lambda_i = |\lambda|$, or as $\lambda = (1^{m_1} 2^{m_2} \cdots )$, where $m_j = m_j(\lambda)$ is the multiplicity of $j$ in $\lambda$, so $\sum_j j m_j = |\lambda|$

One important basis of $\Lambda$ (or $\Lambda[q]$) is given by the set of elementary symmetric functions.  In particular, for a positive integer $j$, define $e_j$ to be the symmetric function which is the projective limit of the elementary symmetric polynomial $e_j(t_1, \ldots, t_l)$ introduced in previous sections.  If $\lambda \in \cP$, with $\lambda = (\lambda_1, \lambda_2, \ldots) = (1^{m_1} 2^{m_2} \cdots)$, then define $e_{\lambda}$ as
$$ e_{\lambda} = e_{\lambda_1} e_{\lambda_2} \cdots = e_1^{m_1} e_2^{m_2} \cdots.$$
Then $\{ e_{\lambda} \, \mid \, |\lambda| = k \}$ is a $\ZZ$-basis (or a $\ZZ[q]$-basis) for $\Lambda^k$ (or $\Lambda[q]^k$), and $\{e_{\lambda} \, \mid \, \lambda \in \cP \}$ is a  $\ZZ$-basis (or a $\ZZ[q]$-basis) for $\Lambda$ (or $\Lambda[q]$) \cite[I.2]{Ma95}.

We now consider then fact that $\tH_n \in \Lambda[q]$.  Note that Proposition \ref{RSRec} turns into the following in $\Lambda[q]$:
\begin{equation} \label{RecSym}
\tH_{n+1} = \sum_{j=0}^n (-1)^j e_{j+1} \binom{n}{j}_q (q)_j \tH_{n-j}.
\end{equation}

We need the following lemma.

\begin{lemma} \label{HeCoeff}  If we expand $\tH_n \in \Lambda[q]$ in the elementary symmetric function basis over $\ZZ[q]$, the coefficient of $e_{(1^n)}$ is $1$.
\end{lemma}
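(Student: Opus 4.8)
The plan is to induct on $n$, using the recursion (\ref{RecSym}) for $\tH_{n+1}$ in terms of $\tH_n, \tH_{n-1}, \ldots, \tH_0$. For the base case, $\tH_0 = 1 = e_{(1^0)} = e_\emptyset$, and $\tH_1 = t_1 + t_2 + \cdots = e_1 = e_{(1^1)}$, so the coefficient of $e_{(1^n)}$ is $1$ in each case. Now assume the claim holds for all indices up to $n$, and consider (\ref{RecSym}). The only term on the right-hand side that can contribute an $e_{(1^{n+1})}$ is the $j = 0$ term, namely $e_1 \cdot \binom{n}{0}_q (q)_0 \cdot \tH_n = e_1 \tH_n$, since for $j \geq 1$ every monomial $e_{j+1}\, e_\lambda$ appearing in $e_{j+1} \tH_{n-j}$ contains the part $j+1 \geq 2$ and so cannot equal $e_{(1^{n+1})} = e_1^{n+1}$.

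So it remains to extract the coefficient of $e_{(1^{n+1})} = e_1^{n+1}$ in $e_1 \tH_n$. Writing $\tH_n = \sum_{\lambda} c_\lambda e_\lambda$ with $c_{(1^n)} = 1$ by the induction hypothesis, we get $e_1 \tH_n = \sum_\lambda c_\lambda\, e_1 e_\lambda = \sum_\lambda c_\lambda\, e_{\lambda \cup (1)}$, where $\lambda \cup (1)$ denotes the partition obtained from $\lambda$ by adjoining a part equal to $1$. The term $e_1^{n+1}$ arises exactly when $\lambda \cup (1) = (1^{n+1})$, i.e. exactly from $\lambda = (1^n)$; and this is a bijective correspondence since adjoining a part of size $1$ to distinct partitions of $n$ gives distinct partitions of $n+1$. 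Hence the coefficient of $e_{(1^{n+1})}$ in $e_1 \tH_n$, and therefore in $\tH_{n+1}$, equals $c_{(1^n)} = 1$, completing the induction.

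The argument is essentially bookkeeping, so there is no serious obstacle; the one point requiring a moment's care is the observation that the elementary symmetric functions $\{e_\mu\}$ form a basis and that multiplication by $e_1$ simply shifts the index $\mu \mapsto \mu \cup (1)$, so that no cancellation or collision among the $e_1 e_\lambda$ terms can destroy or create the $e_1^{n+1}$ contribution. Equivalently, one may phrase the whole proof via the algebra isomorphism $\Lambda[q] \cong \ZZ[q][e_1, e_2, \ldots]$ (polynomial ring in the $e_j$'s), under which "coefficient of $e_{(1^n)}$" means "coefficient of the monomial $e_1^n$ with no other $e_j$'s," and then the claim is immediate from (\ref{RecSym}) by setting all $e_j$ with $j \geq 2$ to zero: under that specialization $\tH_{n+1} \mapsto e_1 \cdot (\text{image of } \tH_n)$, and induction gives image $e_1^{n+1}$.
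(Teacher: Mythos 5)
Your proof is correct and follows essentially the same route as the paper: induction on $n$ via the recursion (\ref{RecSym}), noting that only the $j=0$ term $e_1\tH_n$ can contribute to the coefficient of $e_{(1^{n+1})}$ since every term with $j\geq 1$ carries a factor $e_{j+1}$ with $j+1\geq 2$. Your added remarks on the map $\lambda \mapsto \lambda \cup (1)$ and the polynomial-ring viewpoint $\Lambda[q]\cong \ZZ[q][e_1,e_2,\ldots]$ are just slightly more explicit justifications of the same bookkeeping.
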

\begin{proof} We prove by induction on $n$, and for $n=0$ we have $\tH_0 = 1 = e_{(0)}$ by definition, and for $n=1$, $\tH_1 = e_{(1)}$.  We assume the statement holds for all $j \leq n$, and write
\begin{equation} \label{Hexp}
\tH_j = \sum_{|\lambda| = j} c_{\lambda, j}(q) e_{\lambda} = \sum_{|\lambda| = j} c_{\lambda, j}(q) e_1^{m_1(\lambda)} e_2^{m_2(\lambda)} \cdots,
\end{equation}
where each $c_{\lambda, j}(q) \in \ZZ[q]$, and $c_{(1^j), j}(q) = 1$ for each $j \leq n$.  Now consider $\tH_{n+1}$, and using (\ref{RecSym}), we have
\begin{align*}
\tH_{n+1} & = \sum_{j=0}^n (-1)^j e_{j+1} \binom{n}{j}_q (q)_j \tH_{n-j}\\
         & = e_1 \tH_n + \sum_{j=1}^n (-1)^j e_{j+1} \binom{n}{j}_q (q)_j \tH_{n-j} \\
 & = e_1 \sum_{|\mu| = n} c_{\mu, n}(q) e_{\mu} + \sum_{j=1}^{n} \sum_{|\lambda| = j} c_{\lambda, j}(q) (-1)^j \binom{n}{j}_q (q)_j e_{j+1} e_{\lambda}.
\end{align*}
In the first sum, we have $c_{(1^n), n}(q) = 1$, so that the coefficient of $e_{(1^{n+1})}$ from the first term has coefficient 1.  The fact that $e_{(1^{n+1})}$ does not appear anywhere in the double sum completes the argument.
\end{proof}

For any partition $\lambda = (1^{m_1} 2^{m_2} 3^{m_3} \cdots)$, define $\tlambda = (2^{m_2} 3^{m_3} \cdots)$.  That is, $m_1(\tlambda) = 0$, while $m_j(\tlambda) = m_j(\lambda)$ when $j \geq 2$ (note that if $\lambda = (1^m)$, then $\tlambda$ is the empty partition).  Now, for any partition $\lambda$, we may consider the symmetric function $\tH_{m_1} e_{\tlambda}$, where $m_1 = m_1(\lambda)$, and $\tH_{m_1} e_{\tlambda} \in \Lambda[q]^k$ if $|\lambda| = k$.  Denote this symmetric function by $R_{\lambda}$.  

We may now reconsider how Theorem \ref{ProdMain} translates in the language of $\Lambda[q]$.  In terms of the product $\tH_k \tH_n$, given any $\vec{m}$ with a finite number of positive integer entries, we may think of $\vec{m} \in \ZZ_{\geq 0}^{l-1}$ with $\vec{m} = [m_2, m_3, \ldots, m_l]$.  If $\wt(\vec{m}) \leq k+n$, then $\vec{m}$ corresponds to a partition $\lambda$ such that $|\lambda| = k+n$ and $\tlambda = (2^{m_2} 3^{m_3} \cdots )$, and conversely, any $\vec{m}$ satisfying $\wt(\vec{m}) \leq k+n$ corresponds to a unique partition $\lambda$ of $k+n$.  Now, given any partition $\lambda$ of $k+n$, we define $\theta_{\lambda, k, n}(q)$ as
$$\theta_{\lambda, k, n}(q) = \theta_{\vec{m}, k, n}(q),$$
where $\vec{m} = [m_2(\lambda), m_3(\lambda), \ldots]$.  Since $\wt(\vec{m}) = |\tlambda|$ in this correspondence, we may now re-write Theorem \ref{ProdMain} as follows.

\begin{corollary} \label{ReProd}
For any $k, n \geq 0$, we have
$$\tH_k \tH_n = \sum_{\lambda \in \cP \atop{|\lambda| = k+n}} (-1)^{|\tlambda|} \theta_{\lambda, k, n}(q) R_{\lambda}.$$
\end{corollary}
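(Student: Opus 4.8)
The plan is to obtain Corollary~\ref{ReProd} as a direct translation of Theorem~\ref{ProdMain} into the ring $\Lambda[q]$, the only real content being a reindexing of the sum and a check that the finite-variable identities stabilize.

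First I would fix $k, n \geq 0$ and observe that in Theorem~\ref{ProdMain} the coefficient $\theta_{\vec m, k, n}(q)$ vanishes unless $\wt(\vec m) \leq k+n$, and that any such $\vec m$ satisfies $m_i = 0$ for all $i > k+n$. Hence, for every $l > k+n$, the right-hand side of Theorem~\ref{ProdMain} read in $l$ variables involves only the $e_i$ with $2 \le i \le k+n$ and the polynomials $\tH_j$ with $0 \le j \le k+n$, and neither the set of nonzero terms nor the coefficients $\theta_{\vec m, k, n}(q)$ depend on $l$ once $l > k+n$ (the recursion \eqref{thetadef} for a fixed $\vec m$ only ever calls on vectors supported in the coordinates where $\vec m$ is supported and on the summands $j$ below that support, so it is insensitive to the ambient dimension). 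Since $\tH_j(t_1,\ldots,t_l,0,\ldots,0) = \tH_j(t_1,\ldots,t_l)$ and the analogous stability holds for each $e_i$, the identities of Theorem~\ref{ProdMain} for varying $l$ are compatible with the projective maps $p^{k+n}_{m,l}$, so they assemble into a single identity in $\Lambda[q]^{k+n}$:
\[
\tH_k \tH_n = \sum_{\vec m} (-1)^{\wt(\vec m)} \theta_{\vec m, k, n}(q) \Bigl(\prod_{i \ge 2} e_i^{m_i}\Bigr) \tH_{k+n - \wt(\vec m)},
\]
the sum running over vectors $\vec m$ with finitely many nonzero entries and $\wt(\vec m) \le k+n$. (Alternatively, one may invoke the fact that $\Lambda[q]^{k+n} \to \Lambda[q]^{k+n}_l$ is injective for $l \ge k+n$, and simply read off the identity from Theorem~\ref{ProdMain} taken in $l = k+n$ variables.)

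Next I would perform the change of index. The assignment sending such a $\vec m$ to the partition $\lambda$ of $k+n$ with $m_1(\lambda) = k+n - \wt(\vec m)$ and $m_i(\lambda) = m_i$ for $i \ge 2$ is the bijection recalled just before the corollary; under it one has $\wt(\vec m) = |\tlambda|$, hence $k+n-\wt(\vec m) = m_1(\lambda)$, as well as $\prod_{i\ge 2} e_i^{m_i} = e_{\tlambda}$ and, by definition, $\theta_{\vec m,k,n}(q) = \theta_{\lambda, k, n}(q)$. Therefore the $\vec m$-term above equals $(-1)^{|\tlambda|}\theta_{\lambda,k,n}(q)\, \tH_{m_1(\lambda)} e_{\tlambda} = (-1)^{|\tlambda|}\theta_{\lambda,k,n}(q)\, R_\lambda$, and summing over all partitions $\lambda$ of $k+n$ yields exactly the asserted formula $\tH_k \tH_n = \sum_{|\lambda| = k+n} (-1)^{|\tlambda|} \theta_{\lambda, k, n}(q) R_\lambda$.

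There is no genuinely hard step; the one point that needs care is the passage from Theorem~\ref{ProdMain} (finitely many variables) to an identity in $\Lambda[q]$, and this reduces entirely to the stated stability $\tH_n(t_1,\ldots,t_l,0,\ldots,0) = \tH_n(t_1,\ldots,t_l)$ (with the analogue for the $e_i$) together with the vanishing of $\theta_{\vec m, k, n}(q)$ outside the bounded range $\wt(\vec m) \le k+n$. Everything else is a dictionary between the vector notation $\vec m$ and the partition notation $\lambda$, which was already set up in the paragraph preceding the statement.
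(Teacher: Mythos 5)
Your proposal is correct and follows essentially the same route as the paper, which presents Corollary~\ref{ReProd} as an immediate reindexing of Theorem~\ref{ProdMain} via the correspondence $\vec{m} \leftrightarrow \lambda$ set up in the preceding paragraph. The only addition you make is to spell out the stabilization argument for passing from finitely many variables to $\Lambda[q]$, which the paper leaves implicit in its earlier discussion of $\tH_n$ as a symmetric function.
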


The next result puts Corollary \ref{ReProd} in a satisfying algebraic context.

\begin{theorem} \label{basis} The set $\mathcal{R} = \{R_{\lambda} \, \mid \, \lambda \in \cP \}$ is a $\ZZ[q]$-basis for $\Lambda[q]$, where $R_{\lambda} = \tH_{m_1(\lambda)} e_{\tlambda}$.
\end{theorem}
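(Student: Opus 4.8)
The plan is to fix a homogeneous degree $k$ and compare $\{R_\lambda \mid |\lambda| = k\}$ with the elementary basis $\{e_\lambda \mid |\lambda| = k\}$ of the free $\ZZ[q]$-module $\Lambda[q]^k$. If the transition matrix expressing the $R_\lambda$ in terms of the $e_\lambda$ is invertible over $\ZZ[q]$, then $\{R_\lambda \mid |\lambda| = k\}$ is also a $\ZZ[q]$-basis of $\Lambda[q]^k$, and taking the direct sum over all $k$ shows that $\mathcal{R}$ is a $\ZZ[q]$-basis of $\Lambda[q] = \bigoplus_k \Lambda[q]^k$. So everything reduces to showing this transition matrix is triangular with $1$'s on the diagonal, with respect to a suitable ordering of the partitions of $k$.

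To that end, I would first record the shape of $R_\lambda$ in the $e$-basis. Write $\tH_{m_1} = \sum_{|\mu| = m_1} c_{\mu, m_1}(q) e_\mu$ with $c_{\mu, m_1}(q) \in \ZZ[q]$; by Lemma \ref{HeCoeff} we have $c_{(1^{m_1}), m_1}(q) = 1$. The elementary point is that any partition $\mu$ of $m_1$ with $\mu \neq (1^{m_1})$ has at least one part $\geq 2$, hence $m_1(\mu) \leq m_1 - 2 < m_1$. Since $e_\mu e_{\tlambda} = e_{\nu}$ where $\nu$ is the partition with $m_j(\nu) = m_j(\mu) + m_j(\tlambda)$ for all $j$, and since $m_1(\tlambda) = 0$, multiplying $\tH_{m_1}$ by $e_{\tlambda}$ gives
$$ R_\lambda = \tH_{m_1(\lambda)} e_{\tlambda} = e_\lambda + \sum_{\nu} d_{\nu, \lambda}(q)\, e_\nu, $$
where $d_{\nu, \lambda}(q) \in \ZZ[q]$ and the sum runs over partitions $\nu$ with $|\nu| = |\lambda|$ and $m_1(\nu) < m_1(\lambda)$; here one uses $e_{(1^{m_1(\lambda)})} e_{\tlambda} = e_\lambda$ to identify the leading term, and notes that $\mu = (1^{m_1})$ is the only $\mu$ whose contribution lands on $e_\lambda$.

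Next I would assemble this into a matrix statement. Order the partitions of $k$ so that $m_1(\cdot)$ is weakly decreasing. By the displayed expansion, the matrix $M$ defined by $R_\lambda = \sum_\nu M_{\nu, \lambda}\, e_\nu$ satisfies $M_{\lambda, \lambda} = 1$, while $M_{\nu, \lambda} = 0$ whenever $\nu \neq \lambda$ and $m_1(\nu) \geq m_1(\lambda)$. Hence $M$ is triangular with all diagonal entries equal to $1$, so $\det M = 1$ and $M \in \GL_N(\ZZ[q])$, where $N$ is the number of partitions of $k$. It follows that $\{R_\lambda \mid |\lambda| = k\}$ is a $\ZZ[q]$-basis of $\Lambda[q]^k$ for every $k$, which proves Theorem \ref{basis}.

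The only real work is the combinatorial bookkeeping in the middle step: that multiplication of elementary symmetric functions corresponds to concatenating part-multiplicity vectors, that this operation never raises $m_1$ above $m_1(\mu)$ when one factor is $e_{\tlambda}$, and that $m_1(\mu) < m_1$ for every $\mu \vdash m_1$ other than $(1^{m_1})$. All of these are immediate, so the substance of the argument is just Lemma \ref{HeCoeff} together with the standard fact that $\{e_\lambda\}$ is a $\ZZ[q]$-basis of $\Lambda[q]$; I do not anticipate a genuine obstacle.
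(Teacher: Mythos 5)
Your proof is correct, and it takes a genuinely different route from the paper's. The paper splits the theorem into two separate arguments: spanning is proved by induction on the power of $e_1$, using the product formula (Corollary \ref{ReProd}) to rewrite $\tH_1 \tH_{m_1(\lambda)}$ back in terms of the $R_\eta$; linear independence is then proved by a double reverse induction (on $m_1(\lambda)$, then lexicographically on $\tlambda$), again reading off coefficients in the $e$-basis via Lemma \ref{HeCoeff}. You instead package both halves into a single unitriangularity statement: since every $\mu \vdash m_1$ other than $(1^{m_1})$ has a part $\geq 2$ and hence $m_1(\mu) \leq m_1 - 2$, and since multiplying by $e_{\tlambda}$ adds nothing to the multiplicity of $1$, the transition matrix from $\{e_\lambda\}$ to $\{R_\lambda\}$ on $\Lambda[q]^k$ is triangular with $1$'s on the diagonal for any ordering refining decreasing $m_1$, hence lies in $\GL_N(\ZZ[q])$. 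This is cleaner and more self-contained: it needs only Lemma \ref{HeCoeff} and the freeness of $\Lambda[q]^k$ on the $e_\lambda$, and it avoids invoking the product formula entirely (which is aesthetically preferable, since in the paper Theorem \ref{basis} is then used to deduce Corollary \ref{thetasym} about the $\theta$'s, so decoupling the basis statement from the product formula makes the logical dependencies more transparent). Your observation that the matrix is actually diagonal within each fixed-$m_1$ block also shows the paper's inner lexicographic induction is more than is needed. The paper's spanning argument does have the side benefit of illustrating how products $R_\kappa R_\nu$ decompose, which sets up the discussion of structure constants that follows, but as a proof of the basis statement itself your argument is complete and arguably tighter.
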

\begin{proof}  We first show that $\ZZ[q]\text{-span}(\mathcal{R}) = \Lambda[q]$.  From the fact that the elementary symmetric functions form a basis for $\Lambda[q]$, and the fact that $R_{\lambda} = e_{\lambda}$ whenever $m_1(\lambda) = 0$, we only need to show that, for $n > 0$ and any $\nu \in \cP$ with $m_1(\nu) = 0$, $e_1^n e_{\nu} \in \ZZ[q]\text{-span}(\mathcal{R})$.  When $n=1$, since $e_1 = \tH_1$, then $e_1 e_{\nu} = \tH_1 e_{\nu}$, and there is nothing to prove.  Supposing the statement holds for $n$, we consider $e_1^{n+1} e_{\nu} = e_1 e_1^n e_{\nu}$, and suppose we have
$$ e_1^n e_{\nu} = \sum_{\lambda \in \cP} a_{\lambda}(q) R_{\lambda},$$
where a finite number of the coefficients $a_{\lambda}(q) \in \ZZ[q]$ are nonzero, and $|\lambda| = |\nu| + n$ for each such $\lambda$.  Then we have
\begin{align*}
e_1^{n+1} e_{\nu} & = e_1 \sum_{\lambda \in \cP} a_{\lambda}(q) R_{\lambda} = \tH_1 \sum_{\lambda \in \cP} a_{\lambda}(q) \tH_{m_1(\lambda)} e_{\tlambda} \\
 & = \sum_{\lambda \in \cP} a_{\lambda}(q) \tH_1 \tH_{m_1(\lambda)} e_{\tlambda}.
\end{align*}
Now, from Corollary \ref{ReProd}, $\tH_1 \tH_{m_1(\lambda)} \in \ZZ[q]$-span$\{R_{\eta} \, \mid \, |\eta| = 1 + m_1(\lambda) \}$, and for any $\lambda \in \cP$, $R_{\eta} e_{\tlambda} = R_{\mu}$, where $|\mu| = |\eta| + |\tlambda|$.  It follows that $e_1^{n+1} e_{\nu} \in \ZZ[q]\text{-span}(\mathcal{R})$, so that $\ZZ[q]\text{-span}(\mathcal{R}) = \Lambda[q]$.

For linear independence, it is enough to show that for each $k$, the set $\mathcal{R}^k = \{ R_{\lambda} \, \mid \, |\lambda| = k \}$ is linearly independent over $\ZZ[q]$.  Suppose that $b_{\lambda}(q) \in \ZZ[q]$, $|\lambda|=k$ satisfy
\begin{equation} \label{lincomb}
\sum_{|\lambda| = k} b_{\lambda}(q) R_{\lambda} = 0.
\end{equation}
We prove by reverse induction on $m_1(\lambda)$ that each $b_{\lambda}(q) = 0$.  If $m_1(\lambda) = k$, then $\lambda = (1^k)$, and $R_{\lambda} = \tH_k$.  Write each $R_{\lambda}$ in (\ref{lincomb}) as a $\ZZ[q]$-linear combination of the elementary symmetric functions.  Then by Lemma \ref{HeCoeff}, the coefficient of $e_{(1^k)}$ in the expansion of $R_{(1^k)} = \tH_k$ is $1$, while $e_{(1^k)}$ cannot appear in the expansion of any other $R_{\lambda}$ in (\ref{lincomb}).  Since the coefficient of $e_{(1^k)}$ in the expansion of (\ref{lincomb}) is $b_{(1^k)}(q)$, and the elementary symmetric functions are linearly independent, $b_{(1^k)}(q) = 0$.  Now let $j < k$ assume that $b_{\lambda}(q) = 0$ whenever $m_1(\lambda) >j$.  We must show that $b_{\lambda}(q) = 0$ whenever $m_1(\lambda) = j$.  We may prove this by reverse induction on $\tlambda$ with the lexicographical ordering.  The first case is $\tlambda = (k-j)$.  As before, expand (\ref{lincomb}) in the elementary symmetric function basis, and the coefficient of $e_{(1^j)} e_{(k-j)}$ must be $b_{(1^j (k-j))}(q)$, which then must be $0$.  Then, if $m_1(\mu)= j$, and $b_{\lambda}(q) = 0$ whenever $m_1(\lambda) = j$ and $\tlambda$ is greater than $\tmu$ in the lexicographical ordering, we may expand again in the elementary symmetric function basis, and use the induction hypothesis to see that $b_{\mu}(q) = 0$.  This completes the proof.
\end{proof}

We may immediately conclude the following property of the polynomials $\theta_{\lambda, n, k}(q)$.

\begin{corollary} \label{thetasym}
For any $\lambda$ (or $\vec{m}$), and any $n, k \geq 0$, we have $\theta_{\lambda, n, k} = \theta_{\lambda, k,n}$ (or $\theta_{\vec{m}, n, k} = \theta_{\vec{m}, k, n}$).
\end{corollary}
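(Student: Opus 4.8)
\section*{Proof proposal for Corollary \ref{thetasym}}

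The plan is to deduce the symmetry directly from the uniqueness of coefficients in the basis $\mathcal{R}$. By Theorem \ref{basis}, the set $\mathcal{R}^{k+n} = \{ R_{\lambda} \, \mid \, |\lambda| = k+n \}$ is linearly independent over $\ZZ[q]$, so every element of $\Lambda[q]^{k+n}$ has at most one expansion as a $\ZZ[q]$-linear combination of the $R_{\lambda}$ with $|\lambda| = k+n$.

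First I would apply Corollary \ref{ReProd} to the product $\tH_k \tH_n$, obtaining
$$\tH_k \tH_n = \sum_{\lambda \in \cP \atop{|\lambda| = k+n}} (-1)^{|\tlambda|} \theta_{\lambda, k, n}(q) R_{\lambda}.$$
Next, since the ordinary product in $\Lambda[q]$ is commutative, $\tH_k \tH_n = \tH_n \tH_k$, and applying Corollary \ref{ReProd} again, this time to $\tH_n \tH_k$, gives
$$\tH_k \tH_n = \tH_n \tH_k = \sum_{\lambda \in \cP \atop{|\lambda| = k+n}} (-1)^{|\tlambda|} \theta_{\lambda, n, k}(q) R_{\lambda}.$$
Both displays express the same element of $\Lambda[q]^{k+n}$ in the basis $\mathcal{R}^{k+n}$, so comparing coefficients of $R_{\lambda}$ and cancelling the common sign $(-1)^{|\tlambda|}$ yields $\theta_{\lambda, k, n}(q) = \theta_{\lambda, n, k}(q)$ for every $\lambda$ with $|\lambda| = k+n$. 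Unwinding the correspondence $\lambda \leftrightarrow \vec{m} = [m_2(\lambda), m_3(\lambda), \ldots]$ from the discussion preceding Corollary \ref{ReProd} gives the equivalent statement $\theta_{\vec{m}, n, k} = \theta_{\vec{m}, k, n}$, completing the argument.

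There is essentially no obstacle here: the content has already been absorbed into Theorem \ref{basis} (the linear independence of $\mathcal{R}$), and the only thing to be careful about is that the two applications of Corollary \ref{ReProd} land in the \emph{same} graded piece $\Lambda[q]^{k+n}$ and use the \emph{same} indexing set of partitions, which is immediate since $k+n = n+k$. One could alternatively remark, as the authors note at the end of Section \ref{ProdSec}, that the symmetry can be extracted directly from the recursion (\ref{thetadef}), but that route is more tedious and the basis argument above is the cleaner one.
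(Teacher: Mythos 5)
Your proposal is correct and follows exactly the paper's own argument: expand both $\tH_k\tH_n$ and $\tH_n\tH_k$ via Corollary \ref{ReProd}, invoke commutativity, and compare coefficients using the linear independence of $\mathcal{R}$ from Theorem \ref{basis}. No issues.
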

\begin{proof} We may apply Corollary \ref{ReProd} (or Theorem \ref{ProdMain}) to expand both $\tH_n \tH_k$ and $\tH_k \tH_n$ in terms of the $R_{\lambda}$.  By Theorem \ref{basis}, for any $\lambda$, the coefficient of $R_{\lambda}$ must be the same in each.
\end{proof}

For any $R_{\kappa}, R_{\nu} \in \mathcal{R}$, it follows from Theorem \ref{basis} that $R_{\kappa}R_{\nu}$ may be written uniquely as a $\ZZ[q]$-linear combination of elements in $\mathcal{R}$, or more precisely, if $|\kappa| + |\nu| = j$, elements in $\mathcal{R}^j = \{ R_{\gamma} \, \mid \, |\gamma| = j \}$.  That is, there are unique $\Theta_{\kappa, \nu, \gamma}(q) \in \ZZ[q]$ such that $R_{\kappa} R_{\nu} = \sum_{\gamma \in \cP} \Theta_{\kappa, \nu, \gamma}(q) R_{\gamma}$, where $\Theta_{\kappa, \nu, \gamma}(q)$ are called the \emph{structure constants} of the graded algebra $\Lambda[q]$ with respect to the $\ZZ[q]$-linear basis $\mathcal{R}$.  Corollary \ref{ReProd} may be applied in this situation as follows.  Let $k = m_1(\kappa)$, $n = m_1(\nu)$, so 
$$ R_{\kappa} R_{\nu} = e_{\tkappa} e_{\tnu} \tH_k \tH_n = e_{\tkappa} e_{\tnu} \sum_{\lambda \in \cP \atop{|\lambda| = k+n}} (-1)^{|\tlambda|} \theta_{\lambda, k, n}(q) R_{\lambda} = \sum_{\lambda \in \cP \atop{|\lambda| = k+n}} (-1)^{|\tlambda|} \theta_{\lambda, k, n}(q) R_{\lambda \cup \tkappa \cup \tnu},$$
where if $\alpha, \beta \in \cP$, then $\alpha \cup \beta$ is the partition obtained by taking the union of the multiset of their parts.

That is, if we write $R_{\kappa} R_{\nu} = \sum_{\gamma \in \cP} \Theta_{\kappa, \nu, \gamma}(q) R_{\gamma}$, then the structure constant $\Theta_{\kappa, \nu, \gamma}(q) = (-1)^{|\tlambda|} \theta_{\lambda, k, n}(q)$ whenever $\gamma = \lambda \cup \tkappa \cup \tnu$ for some $\lambda$ a partition of $k+n = m_1(\kappa) + m_1(\nu)$, and $\Theta_{\kappa, \nu, \gamma}(q) = 0$ otherwise.  So, the polynomials $\theta_{\lambda, k, n}(q)$ are, up to a sign, exactly these structure constants.\\
\\
\noindent{\bf Remark. } If one takes $t_1 = \cdots = t_l = 1$, then 
$$\tH_n(1, \ldots, 1) = \sum_{r_1 + \cdots + r_l =n} \binom{n}{r_1, \ldots, r_l}_q,$$
is the \emph{generalized Galois number}, which we denote by $G_n^{(l)}(q)$.  When $q$ is the power of a prime, it is known that $G_n^{(l)}(q)$ is the number of flags of length $l-1$ in an $n$-dimensional vector space over a field with $q$ elements (see \cite{BlKo13}, for example).  Then, the product $G_k^{(l)}(q) G_n^{(l)}(q)$ is the number of ordered pairs of such flags, the first from a $k$-dimensional space, and the second from an $n$-dimensional space.  Making the substitution $t_1 = \cdots = t_l = 1$ into the product formula in Theorem \ref{ProdMain} gives a curious alternating sum for this quantity, which may have some bijective proof through an inclusion-exclusion argument.  While we were unable to find such an argument, one would provide some enumerative meaning to the polynomials $\theta_{\lambda, n, k}(q)$ (or $\theta_{\vec{m}, n, k}(q)$), which would be a nice direction for future work.

\bigskip

\noindent
\begin{tabular}{ll}
\textsc{Department of Mathematics}\\ 
\textsc{College of William and Mary}\\
\textsc{P. O. Box 8795}\\
\textsc{Williamsburg, VA  23187}\\
{\em e-mail}:  {\tt spcameron@email.wm.edu}, {\tt vinroot@math.wm.edu}\\
\end{tabular}

\end{document}